\g@addto@macro{\endabstract}{\@setabstract}
\newcommand{\authorfootnotes}{\renewcommand\thefootnote{\@fnsymbol\c@footnote}}%
\renewcommand{\leq}{\leqslant}
\renewcommand{\geq}{\geqslant}
\makeatletter \numberwithin{equation}{section}
\numberwithin{figure}{section} 
\theoremstyle{plain}
\newtheorem{thm}{Theorem}[section]
\newtheorem{lem}[thm]{Lemma} 
\theoremstyle{Definition}
\begin{document}

\begin{center}
  \LARGE
   Choldowsky type generalization of
the $q-$Favard-Sz\`{a}sz operators \par \bigskip

  \normalsize
  \authorfootnotes
  Ali Karaisa \footnote{Corresponding author: Tel:+90 332 323 8220; fax:+90
332 323 8245 E-mail:akaraisa@konya.edu.tr}  \par \bigskip

Department of MathematÝcs-Computer Sciences, Faculty of Science,
Necmettin Erbakan Unýversity Meram Campus , 42090 Meram, Konya,
Turkey \par
  \par \bigskip

\end{center}

\begin{abstract}
In the present paper, we introduce a  Choldowsky type generalization
of the $q-$Favard-Sz\`{a}sz operators and obtain weighted
statistical approximation properties of these operators. We also
establish the rates of statistical convergence by means of the
modulus of continuity and the Lipschitz type maximal function.
Further, we study the local approximation properties of these
operators.

\textbf{Keywords:} Choldowsky type generalization of the
$q-$Favard-Sz\`{a}sz operators, modulus of continuity, local
approximation, Peetre's K-functional, statistical convergence.

\textbf{MSC:}41A25, 41A36
\end{abstract}

\section{Introduction}
The approximation of function by using the linear positive operators
introduced via $q-$ calculus is currently be come active research.
After the paper of Phillips \cite{phillips} who generalized the
classical Bernstein polynomials base on $q-$integer, many
generalization of well-known positive linear operators, base on
$q-$integer were introduce and studied several authors. We refer to
the following studies related to the approximation properties of the
$q-$analogues of some operators, \cite{q1,q2,1,2}. Recently, the
statistical approximation properties have been invesigated for
$q-$analogue polynomials. For example in \cite{11,12} $q-$analogue
of Kantorovich-Bernstein operators;  in \cite{13}
$q-$Baskakov-Kantorovich operators; \cite{14} $q-$Sz\`{a}sz-Mirakjan
operators; in \cite{mursaleen} modified $q-$Stancu-Beta operators
were introduced and their statistical properties were investigated.
For further information related to the statistical approximation of
the operators, the followings are remarkable among others
\cite{5,6,m,karaisa}.

In this work, we generalized a  Choldowsky type Favard-Sz\`{a}sz
operators base on $q-$integer and we study the weighted statistical
approximation properties of the Choldowsky type $q-$Favard-Sz\`{a}sz
operators via Korovkin type approximation theorem. Further we
compute  the rate of statistical convergence by using modulus of
continuity. Furthermore we also obtain some local approximation
results of these new operators.

First of all, we recall some definitions and notations regarding the
concept of $q-$calculus. For any none-negative integer $r$, the $q-$
integer of the number $r$ is defied by
\begin{eqnarray*}
\left[ r\right] _{q}:=\left\{
\begin{tabular}{ll}
$\frac{1-q^{r}}{1-q},$ & if $q\neq1$ \\
$1,$ & if $q=1$%
\end{tabular}%
\right..
\end{eqnarray*}

The $q-$factorial $\left[ n\right] _{q}!$ and $q-$binomial
coefficients  are defined as
\begin{eqnarray*}
\left[ n\right] _{q}!:=\left\{
\begin{tabular}{ll}
$\left[ n\right] _{q}\left[ n-1\right] _{q}\cdots \left[ 1\right] _{q},$ & $%
n\in\mathbb{N}$ \\
$1,$ & $n=0$%
\end{tabular}%
\right.
\end{eqnarray*}
and
\begin{eqnarray*}
\left[
\begin{array}{c}
n \\
k
\end{array}
\right] _{q}=\frac{\left[ n\right] _{q}!}{\left[ k\right]
_{q}!\left[ n-k \right] _{q}!},0\leq k\leq n.
\end{eqnarray*}
The $q-$derivative $D_{q}f$ of a function $f$ is defined by
\begin{eqnarray*}
\left( D_{q}f\right) \left( x\right) =\frac{f\left( x\right)
-f\left( qx\right) }{\left( 1-q\right) x}, \ x\neq 0.
\end{eqnarray*}
Also, if there exists $\frac{df}{dx}\left( 0\right) $, then $ \left(
D_{q}f\right) \left( 0\right) =\frac{df}{dx}\left( 0\right) $. The
following $q-$derivatives of the product of the functions $f(x)$ and
$g(x)$ are equivalent:
\begin{eqnarray*}
D_{q}\left(f(x)g(x)\right) =f\left( qx\right)
D_{q}g\left(x\right)+g(x)D_{q}f\left(x\right)
\end{eqnarray*}
and
\begin{eqnarray*}
D_{q}\left(f(x)g(x)\right) =f\left( x\right)
D_{q}g\left(x\right)+g(q x)D_{q}f\left(x\right).
\end{eqnarray*}
The
$q-$analogues of the exponential function are given by%
\begin{eqnarray*}
e_{q}^{x}=\sum\limits_{n=0}^{\infty }\frac{x^{n}}{\left[ n\right]
_{q}!}
\end{eqnarray*}
and
\begin{eqnarray*}
E_{q}^{x}=\sum\limits_{n=0}^{\infty }q^{\frac{n\left( n-1\right) }{2}}\frac{%
x^{n}}{\left[ n\right] _{q}!}.
\end{eqnarray*}
The exponential functions have the following properties:%
\begin{eqnarray*}
D_{q}\left( e_{q}^{ax}\right) =ae_{q}^{ax},\ D_{q}\left(
E_{q}^{ax}\right) =aE_{q}^{aqx},\
e_{q}^{x}E_{q}^{-x}=E_{q}^{x}e_{q}^{-x}=1.
\end{eqnarray*}
Further results can be found in \cite{kitap 1}.

\section{Construction of the operators}

In \cite{8}, Jakimovski and Leviatan\ introduced a Favard Sz\`{a}sz
type operator, by using Appell polynomials $p_{k}\left( x\right)\geq
0$ defined by%
\begin{equation*}
g\left( u\right) e^{-ux}=\sum\limits_{k=0}^{\infty }p_{k}\left(
x\right) u^{k},
\end{equation*}
where $g\left( z\right) =\sum\limits_{n=0}^{\infty }a_{n}z^{n}$ is
an analytic function in the disc $\left\vert z\right\vert <R$, $R>1$ and $%
g\left( 1\right) \neq 0$,
\begin{equation*}
P_{n,t}\left( f;x\right) =\frac{e^{-nx}}{g\left( 1\right) }%
\sum\limits_{k=0}^{\infty }p_{k}\left( nx\right) f\left(\frac{k}{n}%
\right)
\end{equation*}
and they investigated some approximation properties of these
operators.

Atakut at all. \cite{4} defined  a Choldowsky type of
Favard-Sz\`{a}sz operators as follows:
\begin{equation*}
P^{\ast}(f;x)=\frac{e^{-\frac{nx}{b_{n}}}}{g\left( 1\right) }%
\sum\limits_{k=0}^{\infty }p_{k}\left( \frac{nx}{b_{n}}\right)
f\left( \frac{k }{n }b_{n}\right)
\end{equation*}
with $b_{n}$ a positive increasing sequence with the properties
$\lim_{n \to \infty}b_{n}=\infty$ and $\lim_{n \to
\infty}\frac{b_{n}}{n}=0$. They also studied some approximation
properties of the operators.

Now, let us define Choldowsky type generalization of the
$q-$Favard-Sz\`{a}sz operators as follows:
\begin{equation}
P^{\ast}_{n}\left( f;q;x\right)
=\frac{E_{q}^{-\frac{[n]_{q}}{b_{n}}x}}{A\left(1\right)}\sum_{k=0}^{\infty}\frac{P_{k}\left(
q;\frac{[n]_{q}}{b_{n}}x\right) }{\left[ k\right] _{q}!}f\left(
\frac{\left[ k\right] _{q}}{\left[ n\right] _{q} }b_{n}\right),
\label{A}
\end{equation}
where $\{ P_{k}( q;.)\}\geq 0$ is a $q-$Appell polynomial set which
is generated by
\begin{equation}\label{1}
A\left( u\right) e_{q}^{\frac{[n]_{q}}{b_{n}}xu}=\sum_{k=0}^{\infty }\frac{%
P_{k}\left( q;\frac{[n]_{q}}{b_{n}}x\right) u^{k}}{\left[ k\right]
_{q}!}
\end{equation} and $A(u)$ is defined by
\begin{eqnarray*}
A(u)=\sum_{k=0}^{\infty }a_{k}u^{k}.
\end{eqnarray*}

\section{weighted statistical approximation properties}

In this section, we give Korovkin type weighted  statistical
approximation properties of the operators $P^{\ast}_{n}\left(
f;q;x\right)$. Before proceeding further, let us give basic
definition and notation on the concept of the statistical
convergence which was introduced by Fast \cite{9}. Let $K$ be a
subset of $\mathbb{N}$, the set of natural numbers. Then,
$K_{n}=\left\{k\leq n: k\in K \right\}$. The natural density of $K$
is defined by $\delta(K)=\lim_{n}\frac{1}{n}|K_{n}|$ provided that
the limit exists, where $|K_{n}|$ denotes the cardinality of the set
$K_{n}$. A sequence $x=(x_{k})$ is called statistically convergent
to the number $\ell \in\mathbb{R}$, denoted by $st-\lim x=\ell$. For
each $\epsilon >0,$ the set
$K_{\varepsilon}=\left\{k\in\mathbb{N}:|x_{k}-\ell|\geq\epsilon
\right\} $ has a natural density zero, that is

\begin{eqnarray*}
\lim_{n \to \infty}\frac{1}{n}\left|\{k\leq n: |x_{k}-\ell|\geq
\epsilon \}\right|=0.
\end{eqnarray*}
 It is well know that every statistically convergence sequence is ordinary convergent, but the
converse is not true. The concept of statistical convergence was
firstly used in approximation theory by Gadjiev and Orhan \cite{10}.
They proved the Bohman$-$Korovkin type approximation theorem for
statistical convergence.

A real function $\rho_{0}$ is called a weighted function if it is
continuous on $\mathbb{R}$ and $\lim_{|x|\to 0}\rho_{0}(x)=\infty$,
$\rho_{0}(x)\geq 1$ for all $x\in\mathbb{R}$, where
$\rho_{0}(x)=1+x^{2}$. Let $B_{\rho_{0}}[0,\infty)$ be the set of
all functions $f$ defined on $[0,\infty)$ satisfying the condition
$|f(x)|\leq M_{f}(1+x^{2})$, where $M_{f}$ is a constant depending
on $f$. By $C_{\rho_{0}}[0,\infty)$, we denote the subspace of all
continuous function belonging to $B_{\rho_{0}}[0,\infty)$. Also,
$C^{\ast}_{\rho_{_{0}}}[0,\infty)$ be subspace of all continuous
functions $f\in C_{\rho_{0}}[0,\infty)$, for which $\lim_{|x| \to
\infty}\frac{f(x)}{1+x^{2}}$ is finite.  The norm on
$C^{\ast}_{\rho_{0}}[0,\infty)$ is defined as follows:
\begin{eqnarray*}
\parallel f\parallel_{\rho}=\sup_{x\in
[0,\infty)}\frac{|f(x)|}{1+x^{2}}.
\end{eqnarray*}

Now, we may begin the following lemma which is needed proving our
main result.
\begin{lem}\label{lemma3.1}
For $n\in \mathbb{N}$, $x\in [0,\infty)$ and $0<q<1$, we have%
\begin{eqnarray}
P^{\ast}_{n}\left( e_{0};q;x\right)& =&1,\label{3}\\
P^{\ast}_{n}\left( e_{1};q;x\right)
&=&x+\frac{D_{q}(A(1))E_{q}^{-\frac{[n]_{q}}{b_{n}}x}e_{q}^{\frac{q[n]_{q}}{b_{n}}x}}{A(1)}\frac{b_{n}}{[n]_{q}},\label{4}\\
P^{\ast}_{n}\left( e_{2};q;x\right)&
=&x^{2}+\frac{E_{q}^{-\frac{[n]_{q}}{b_{n}}x}e_{q}^{q\frac{[n]_{q}}{b_{n}}x}\left[qD_{q}(A(q))+D_{q}(A(1))\right]}{A(1)}\frac{xb_{n}}{[n]_{q}}+
\frac{E_{q}^{-\frac{[n]_{q}}{b_{n}}x}e_{q}^{q\frac{[n]_{q}}{b_{n}}x}D^{2}_{q}(A(1))}{A(1)}\frac{b^{2}_{n}}{[n]^{2}_{q}}
\label{5},
\end{eqnarray} where
 $e_i(x)=x^i$, $i=0, 1, 2$.
\end{lem}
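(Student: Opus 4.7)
The plan is to recognize the three moments as sums of the form $\sum_{k\geq 0} P_k(q;y)[k]_q^i/[k]_q!$ with $y=\frac{[n]_q}{b_n}x$, and to extract each from the generating identity $F(u):=A(u)e_q^{yu}=\sum_{k\geq 0}\frac{P_k(q;y)}{[k]_q!}u^k$ by applying $q$-derivatives in $u$ and evaluating at suitable points. Three basic facts already recorded in the introduction will do the heavy lifting: the cancellation $e_q^{x}E_q^{-x}=1$, the exponential derivative $D_q(e_q^{au})=a\,e_q^{au}$, and the $q$-Leibniz rule $D_q(fg)(u)=f(u)D_qg(u)+g(qu)D_qf(u)$. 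With these in hand, (3) is immediate: substituting $u=1$ into $F$ yields $\sum_k P_k(q;y)/[k]_q!=A(1)e_q^y$, and the prefactor $E_q^{-y}/A(1)$ cancels everything down to $1$.

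For (4), the observation $[k]_q/[k]_q!=1/[k-1]_q!$ (for $k\geq 1$) rewrites the target sum as $(D_qF)(1)$. A single application of the $q$-Leibniz rule gives $(D_qF)(u)=y\,A(u)\,e_q^{yu}+e_q^{qyu}D_qA(u)$. Evaluating at $u=1$ and multiplying by $\frac{E_q^{-y}}{A(1)}\cdot\frac{b_n}{[n]_q}$, the term $y\,A(1)\,e_q^y$ contributes exactly $x$ (using $y\,b_n/[n]_q=x$ together with $e_q^yE_q^{-y}=1$), while the remaining $e_q^{qy}D_qA(1)$ piece produces the displayed correction.

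For (5), the key arithmetic identity is $[k]_q^2=[k]_q[k-1]_q+q^{k-1}[k]_q$, which splits the target sum as $(D_q^2F)(1)+(D_qF)(q)$ after the appropriate index shifts. To evaluate $(D_q^2F)(1)$, I would apply the $q$-Leibniz rule a second time to each of the two summands in $D_qF$, carefully tracking the shifted arguments $A(qu)$ and the exponential factors $e_q^{qyu}$ that appear. After evaluating at $u=1$, combining with $(D_qF)(q)$ (computed analogously), and multiplying by $\frac{E_q^{-y}}{A(1)}\cdot\frac{b_n^2}{[n]_q^2}$, the $y^2A(1)e_q^y$ piece collapses to $x^2$, and the remaining terms, grouped by powers of $x$, yield the displayed $\frac{xb_n}{[n]_q}$ and $\frac{b_n^2}{[n]_q^2}$ corrections.

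The main obstacle is (5): the iterated $q$-Leibniz rule produces argument shifts $A(u)\rightsquigarrow A(qu)$ and several different exponential factors $e_q^{yu},\,e_q^{qyu},\,e_q^{q^2yu}$ that need to be kept aligned. Choosing a consistent form of the product rule throughout so that only the single factor $e_q^{qy}$ survives after evaluation, and then condensing the many resulting pieces into the compact coefficients displayed in the lemma, is where essentially all the calculation lies.
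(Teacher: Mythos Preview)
Your plan is essentially the paper's own argument: both obtain the three moment sums $\sum_{k\ge0}P_k(q;y)[k]_q^{\,i}/[k]_q!$ (with $y=\tfrac{[n]_q}{b_n}x$) by $q$-differentiating the generating identity $A(u)e_q^{yu}=\sum_{k\ge0}\tfrac{P_k(q;y)}{[k]_q!}u^k$ in $u$, evaluating, and then substituting into the definition of $P_n^{\ast}$. The paper simply asserts the resulting identities (its three displayed sums) ``by (\ref{1}) and the definition of the $q$-derivative'' and does not show the Leibniz steps; you make the mechanism explicit, including the splitting $[k]_q^2=[k]_q[k-1]_q+q^{k-1}[k]_q$ that expresses the second-moment sum as $(D_q^2F)(1)+(D_qF)(q)$.

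One caution on the $e_2$ case. Your decomposition is the correct one, and it produces the extra piece $(D_qF)(q)=yA(q)\,e_q^{qy}+e_q^{q^2y}D_qA(q)$ in addition to $(D_q^2F)(1)$. The paper's displayed identity for $\sum_k\tfrac{P_k}{[k]_q!}[k]_q^2$, and consequently the stated formula~(\ref{5}) for $P_n^{\ast}(e_2;q;x)$, coincide only with the $(D_q^2F)(1)$ contribution (one can check this already in the limit $q\to1$, where the linear-in-$x$ term $\tfrac{A(1)}{A(1)}\cdot\tfrac{xb_n}{[n]}$ that should come from $\sum_k k\,p_k$ is absent). So if your careful algebra does not collapse to exactly the printed coefficients, that reflects an omission in the paper's stated result, not a flaw in your method.
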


\begin{proof}
By (\ref{1}) and  definition of $q$ derivative, we obtain that

\begin{eqnarray}
\sum_{k=0}^{\infty }\frac{p_{k}\left(
q;\frac{[n]_{q}}{b_{n}}x\right) }{\left[ k\right] _{q}!}&=&A\left(
1\right) e_{q}^{\frac{[n]_{q}}{b_{n}}x},  \label{6}\\
\sum_{k=0}^{\infty }\frac{p_{k}\left(
q;\frac{[n]_{q}}{b_{n}}x\right) }{\left[ k\right] _{q}!}\left[
k\right] _{q}&=&A\left( 1\right)
\frac{[n]_{q}}{b_{n}}xe_{q}^{\frac{[n]_{q}}{b_{n}}x}+e_{q}^{q\frac{[n]_{q}}{b_{n}}x}D_{q}A\left(
1\right) \label{7}\\
\sum_{k=0}^{\infty }\frac{p_{k}\left(
q;\frac{[n]_{q}}{b_{n}}x\right) }{\left[ k\right] _{q}!}\left[
k\right]_{q}^{2}&=&D_{q}^{2}(A(1))e_{q}^{\frac{q[n]_{q}}{b_{n}}x}+
\frac{x[n]_{q}}{b_{n}}e_{q}^{\frac{q[n]_{q}}{b_{n}}x}\left[qD_{q}(A(q))+D_{q}(A(1)\right]\label{8}\\&&+A(1)
\frac{x^{2}[n]_{q}^{2}}{b_{n}^{2}}
e_{q}^{\frac{[n]_{q}}{b_{n}}x}\nonumber .
\end{eqnarray}

By using the relations (\ref{6})-(\ref{8}), from (\ref{A}), we
obtain the results

\begin{equation*}
P^{\ast}_{n}\left( e_{0};q;x\right)
=\frac{E_{q}^{-\frac{[n]_{q}}{b_{n}}x}}{A\left( 1\right) }
\sum_{k=0}^{\infty }\frac{p_{k}\left(
q;\frac{[n]_{q}}{b_{n}}x\right) }{\left[ k\right]
_{q}!}=\frac{E_{q}^{-\frac{[n]_{q}}{b_{n}}x}}{A\left( 1\right)
}A\left( 1\right) e_{q}^{\frac{[n]_{q}}{b_{n}}x}=1,
\end{equation*}%
\begin{eqnarray*}
P^{\ast}_{n}\left( e_{1};q;x\right)
&=&\frac{E_{q}^{-\frac{[n]_{q}}{b_{n}}x}}{A\left( 1\right)
}\sum_{k=0}^{\infty } \frac{p_{k}\left(q;\frac{[n]_{q}}{b_{n}}x\right) }{\left[ k\right] _{q}!}\left(
\frac{\left[ k\right] _{q} }{\left[ n\right] _{q}}b_{n}\right) \\
&=&\frac{E_{q}^{-\frac{[n]_{q}}{b_{n}}x}b_{n}}{A\left( 1\right)
\left[ n\right] _{q} } \sum_{k=0}^{\infty }\frac{p_{k}\left(
q;\left[ n\right] _{q}t\right)}{\left[k\right] _{q}!}\left[ k\right] _{q} \\
&=&\frac{E_{q}^{-\frac{[n]_{q}}{b_{n}}x}b_{n}}{A\left( 1\right)
\left[ n\right] _{q} }\left( A\left( 1\right) \frac{[n]_{q}}{b_{n}}xe_{q}^{\frac{[n]_{q}}{b_{n}}x}+e_{q}^{q\frac{[n]_{q}}{b_{n}}x}D_{q}A\left( 1\right) \right) \\
&=&x+\frac{D_{q}(A(1))E_{q}^{-\frac{[n]_{q}}{b_{n}}x}e_{q}^{\frac{q[n]_{q}}{b_{n}}x}}{A(1)}\frac{b_{n}}{[n]_{q}}
\end{eqnarray*}

and
\begin{eqnarray*}
P^{\ast}_{n}\left( e_{2};q;x\right)
&=&\frac{E_{q}^{-\frac{[n]_{q}}{b_{n}}x}}{A\left( 1\right) }
\sum_{k=0}^{\infty}\frac{P_{k}\left(q;\frac{[n]_{q}}{b_{n}}x\right)}{\left[
k\right] _{q}!}\frac{[ k]^{2} _{q} }{[ n]^{2} _{q} }b_{n}^{2} \\
&=&\frac{E_{q}^{-\frac{[n]_{q}}{b_{n}}x}}{A\left(
1\right)}\frac{b^{2}_{n}}{[n]^{2}_{q}}\bigg\{D_{q}^{2}(
A(1))e_{q}^{\frac{q[n]_{q}}{b_{n}}x}+
e_{q}^{\frac{q[n]_{q}}{b_{n}}x}\frac{[n]_{q}}{b_{n}}x\left[qD_{q}(A(q))+D_{q}(A(1)\right]+A(1)
\frac{[n]_{q}^{2}}{b_{n}^{2}}
x^{2}e_{q}^{\frac{[n]_{q}}{b_{n}}x}\bigg\}\\
&=&x^{2}+\frac{E_{q}^{-\frac{[n]_{q}}{b_{n}}x}e_{q}^{q\frac{[n]_{q}}{b_{n}}x}\left[qD_{q}(A(q))+D_{q}(A(1))\right]}{A(1)}\frac{xb_{n}}{[n]_{q}}+
\frac{E_{q}^{-\frac{[n]_{q}}{b_{n}}x}e_{q}^{q\frac{[n]_{q}}{b_{n}}x}D^{2}_{q}(A(1))}{A(1)}\frac{b^{2}_{n}}{[n]^{2}_{q}}
\end{eqnarray*} Hence, the proof is completed.
\end{proof}

\begin{thm}
Assume that $q:=(q_n)$, $0<q_n<1$ be a sequence satisfying the following conditions:
\begin{eqnarray}\label{y1}
st-\lim_n \frac{b_{n}}{[n]_{q}}=0,\ \  st-\lim_n q_n=b.
\end{eqnarray}
Let $f\in C[0,\infty)\cap E$, where $E=\{f\in C[0,\infty):|f(x)|\leq
\alpha e^{\beta x}\,\, \textrm{for some}\,\,
\alpha\in\mathbb{R^{+}},\beta\in\mathbb{R} \}$. Then we have the
following:
\begin{equation*}\label{y2}
st-\lim_n \parallel P^{\ast}_{n}(f,q_n;.)-f
\parallel_{\rho_{0}}=0.
\end{equation*}

\end{thm}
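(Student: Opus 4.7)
The plan is to reduce the statement to a weighted Korovkin-type theorem for statistical convergence, in the spirit of Gadjiev and Orhan \cite{10}. By that result it suffices to verify
\begin{equation*}
st\text{-}\lim_n \parallel P^{\ast}_{n}(e_{i};q_{n};\cdot) - e_{i}\parallel_{\rho_{0}} = 0, \qquad i=0,1,2.
\end{equation*}
The case $i=0$ is immediate from (\ref{3}). For $i=1,2$ the substance of the proof consists in producing a uniform-in-$x$ estimate of the error terms in (\ref{4}) and (\ref{5}), after which we let $n$ run through the index set prescribed by the statistical hypothesis.

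The key observation underlying the uniform bound is the elementary inequality
\begin{equation*}
E_{q}^{-y}\, e_{q}^{qy} \;=\; \frac{e_{q}^{qy}}{e_{q}^{y}} \;\leq\; 1, \qquad y \geq 0,\ 0<q<1,
\end{equation*}
which follows from $e_{q}^{y}E_{q}^{-y}=1$ and the monotonicity of $e_{q}^{\cdot}$ on $[0,\infty)$ (the series expansion has non-negative coefficients, so $qy\leq y$ forces $e_{q}^{qy}\leq e_{q}^{y}$). Applying this with $y=[n]_{q_n}x/b_n$ to Lemma \ref{lemma3.1} yields
\begin{equation*}
\sup_{x\geq 0}\frac{|P^{\ast}_{n}(e_{1};q_{n};x)-x|}{1+x^{2}} \;\leq\; \frac{|D_{q_n}A(1)|}{A(1)}\cdot\frac{b_{n}}{[n]_{q_n}},
\end{equation*}
and, using in addition $x/(1+x^{2})\leq 1/2$,
\begin{equation*}
\sup_{x\geq 0}\frac{|P^{\ast}_{n}(e_{2};q_{n};x)-x^{2}|}{1+x^{2}} \;\leq\; \frac{|q_n D_{q_n}A(q_n)+D_{q_n}A(1)|}{2A(1)}\cdot\frac{b_{n}}{[n]_{q_n}} \;+\; \frac{|D_{q_n}^{2}A(1)|}{A(1)}\cdot\frac{b_{n}^{2}}{[n]_{q_n}^{2}}.
\end{equation*}

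Because $A$ is analytic on $|z|<R$ with $R>1$ and $st\text{-}\lim q_n=b$, the three prefactors $|D_{q_n}A(1)|$, $|q_n D_{q_n}A(q_n)+D_{q_n}A(1)|$, and $|D_{q_n}^{2}A(1)|$ are statistically bounded in $n$, being continuous functions of $q$ evaluated along a statistically convergent sequence. Combining this with the hypothesis $st\text{-}\lim_n b_{n}/[n]_{q_n}=0$, the right-hand sides tend to zero statistically, which is exactly the input required by the weighted statistical Korovkin theorem; the conclusion then transfers from the test functions $e_0,e_1,e_2$ to every admissible $f$.

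The main obstacle I foresee is the uniform-in-$x$ control of the error, since the suprema could a priori blow up as $x\to\infty$. Once the inequality $E_{q}^{-y}e_{q}^{qy}\leq 1$ is in place, the remainder is routine bookkeeping with statistical limits plus the invocation of the weighted Korovkin theorem of \cite{10}.
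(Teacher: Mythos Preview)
Your proposal is correct and follows essentially the same route as the paper: reduce to the weighted statistical Korovkin test on $e_0,e_1,e_2$, use Lemma~\ref{lemma3.1} together with the bound $E_q^{-y}e_q^{qy}\le 1$ to control the $\rho_0$-norm of the errors, and then invoke $st\text{-}\lim_n b_n/[n]_{q_n}=0$. If anything you are slightly more careful than the paper, since you explicitly justify the inequality $E_q^{-y}e_q^{qy}\le 1$ and note that the $q_n$-dependent prefactors $D_{q_n}A(1)$, $D_{q_n}^2A(1)$, etc.\ are statistically bounded via the hypothesis $st\text{-}\lim q_n=b$ and the analyticity of $A$, a point the paper glosses over by treating them as constants.
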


\begin{proof}
It is enough to prove that
\begin{equation*}\label{y3}
st-\lim_n \parallel
P^{\ast}_{n}(e_v,q_n;.)-e_v\parallel_{\rho_{0}}=0
\end{equation*}
where $v=0,1,2.$

From the equation (\ref{3}), it is easy to obtain that
\begin{equation*}\label{y4}
st-\lim_n \parallel
P^{\ast}_{n}(e_0,q_n;.)-e_0\parallel_{\rho_{0}}=0.
\end{equation*}

 By (\ref{4}) and combining with $E_{q}^{-\frac{[n]_{q_{n}}}{b_{n}}}e_{q}^{\frac{q_{n}[n]_{q_{n}}}{b_{n}}}<1$,
we get
\begin{equation*}\label{y5}
\parallel P^{\ast}_{n}(e_1,q_n;.)-e_1\parallel_{\rho_{0}}\leq
\frac{D_{q}(A(1))}{A(1)}\frac{b_{n}}{[n]_{q_{n}}}
\parallel e_{0}\parallel_{\rho_{0}}.
\end{equation*}

For $\epsilon>0$, define the following sets:
\begin{eqnarray*}\label{y6}
\mathcal{M}&:=& \left\lbrace  k: \parallel
P^{\ast}_{n}(e_1,q_k;.)-e_1
\parallel_{\rho_{0}} \geq \epsilon
\right\rbrace,\\
\mathcal{M}_1&:=& \left\lbrace  k:
\frac{D_{q}(A(1))}{A(1)}\frac{b_{k}}{[k]_{q_{k}}}\geq \epsilon
\right\rbrace
\end{eqnarray*} such that $\mathcal{M}\subseteq \mathcal{M}_1 $.

By (\ref{y1}), one can write the following
\begin{equation*} \label{y9}
\delta \left\lbrace k \leq n : \parallel
P^{\ast}_{n}(e_1,q_k;.)-e_1\parallel_{\rho_{0}} \geq \epsilon
\right\rbrace \leq \delta \left\lbrace k \leq n:
\frac{D_{q}(A(1))}{A(1)}\frac{b_{k}}{[k]_{q_{k}}}\geq \epsilon
\right\rbrace
\end{equation*}

Hence, we get
\begin{equation} \label{y10}
st-\lim_n \parallel
P^{\ast}_{n}(e_1,q_n;.)-e_1\parallel_{\rho_{0}}=0.
\end{equation}

By (\ref{5}), one can see that
\begin{eqnarray*} \label{y11}
\parallel P^{\ast}_{n}(e_2,q_n;.)-e_2\parallel_{\rho_{0}}& \leq& \frac{
q_{_{n}}D_{q}(A(q))+D_{q}(A(1))}{A(1)}\frac{b_{n}}{[n]_{q_{n}}}\parallel
e_{1}\parallel_{\rho_{0}}\\&&+\frac{D_{q}(A(1))+D^{2}_{q}(A(1))}{A(1)}\frac{b^{2}_{n}}{[n]^{2}_{q_{n}}}\parallel
e_{0}\parallel_{\rho_{0}}
\end{eqnarray*}

Now, let $\epsilon>0$ be given, we define the following sets:
\begin{eqnarray*}\label{y12}
\mathcal{V}&:=& \left\lbrace  k: \parallel
P^{\ast}_{n}(e_2,q_k;.)-e_2
\parallel_{\rho_{0}} \geq \epsilon
\right\rbrace ,\\
\mathcal{V}_1&:=& \left\lbrace k:\frac{
q_{k}D_{q}(A(q))+D_{q}(A(1))}{A(1)}\frac{b_{k}}{[k]_{q_{k}}}\geq
\frac{\epsilon}{2}\right\rbrace ,\\
\mathcal{V}_2&:=& \left\lbrace  k:
\frac{D_{q}(A(1))+D^{2}_{q}(A(1))}{A(1)}
\frac{b^{2}_{k}}{[k]^{2}_{q_{k}}} \geq \frac{\epsilon}{2} \right\rbrace,\\
\end{eqnarray*}
such that $\mathcal{V}\subseteq \mathcal{V}_1 \cup \mathcal{V}_2 $.

Thus, we obtain
\begin{eqnarray}\label{y15}
\delta \left\lbrace k \leq n : \parallel
P^{\ast}_{n}(e_2,q_k;.)-e_2\parallel_{\rho_{0}} \geq \epsilon
\right\rbrace &\leq & \delta \left\lbrace k \leq n: \frac{
q_{k}D_{q}(A(q))+D_{q}(A(1))}{A(1)}\frac{b_{k}}{[k]_{q_{k}}}\right\rbrace
\nonumber\\
&+& \delta \left\lbrace k \leq
n:\frac{D_{q}(A(1))+D^{2}_{q}(A(1))}{A(1)}
\frac{b^{2}_{k}}{[k]^{2}_{q_{k}}} \geq
\frac{\epsilon}{2}\right\rbrace
\end{eqnarray}

Hence, (\ref{y1}) and (\ref{y15}) imply that
\begin{equation}\label{y16}
st-\lim_n \parallel
P^{\ast}_{n}(e_2,q_n;.)-e_2\parallel_{\rho_{0}}=0.
\end{equation}
\end{proof}

\section{Rates of statistical convergence}
In this section, we give the rates of statistical convergence of the
operators $P^{\ast}_{n}\left( f;q;x\right)$ by means of modulus of
continuity with the help of functions from Lipschitz class.

 $C_{B}[0,\infty)$ denotes of all real valued continuous bounded
functions. The modulus of continuity for the function $f\in
C_{B}[0,\infty)$ is defined as
\begin{equation*}
\omega(f,\delta)_{{\rho_{0}}}=\sup_{\substack{|x-y|\leq\delta \\
x,y\in [0,\infty)}}\frac{|f(x)-f(y)|}{1+x^{2+\lambda}}
\end{equation*}
where $\omega(f,\delta)_{{\rho_{0}}}$ for $\delta>0$, $\lambda\geq
0$ satisfy the following conditions: for every $f\in
C_{B}[0,\infty)$.
\begin{equation*}
\lim_{\delta\to \infty}\omega(f,\delta)_{{\rho_{0}}}=0
\end{equation*}
and
\begin{equation}\label{r1}
|f(x)-f(y)|\leq
\omega(f,\delta)_{{\rho_{0}}}\left(\frac{|x-y|}{\delta}+1\right).
\end{equation}
Now, we prove the following theorem for the rate of pointwise
convergence of the operators $P^{\ast}_{n}\left( f;q;x\right)$ to
the function $f(x)$ by means of modulus of continuity.
\begin{thm}\label{thm4.1}
 $q:=(q_{n})$ be sequence and $x\in
[0,\infty)$, then we have
\begin{equation*}\label{rr1}
|P^{\ast}_{n}\left( f;q;x\right)-f(x)|\leq
2\omega(f,\sqrt{\delta_{n}})_{\rho_{0}},
\end{equation*}
for all  $f\in C_{B}[0,\infty)\cap E$, where
\begin{equation}\label{ali}
\delta_{n}=\frac{
q_{n}D_{q}(A(q))+D_{q}(A(1))}{A(1)}\frac{b_{n}}{[n]_{q_{n}}}\parallel
e_{1}\parallel_{\rho_{0}}+\frac{D^{2}_{q}(A(1))}{A(1)}
\frac{b^{2}_{n}}{[n]^{2}_{q_{n}}}\parallel e_{0}\parallel_{\rho_{0}}
\end{equation}

\end{thm}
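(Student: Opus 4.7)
The plan is to follow the standard Korovkin-style estimate that bounds the pointwise error of a positive linear operator by the weighted modulus of continuity of $f$ times a factor controlled by the second central moment of the operator. I would first use positivity and linearity of $P^{\ast}_{n}$ together with $P^{\ast}_{n}(e_{0};q;x)=1$ from (\ref{3}) to write
$$|P^{\ast}_{n}(f;q;x)-f(x)|=|P^{\ast}_{n}(f(t)-f(x);q;x)|\leq P^{\ast}_{n}(|f(t)-f(x)|;q;x),$$
and then insert inequality (\ref{r1}) under the operator with a free parameter $\delta>0$. Since $P^{\ast}_{n}(e_{0};q;x)=1$ again, this yields
$$|P^{\ast}_{n}(f;q;x)-f(x)|\leq \omega(f,\delta)_{\rho_{0}}\!\left(1+\tfrac{1}{\delta}P^{\ast}_{n}(|t-x|;q;x)\right).$$

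The next step is to estimate $P^{\ast}_{n}(|t-x|;q;x)$ by its $L^{2}$-counterpart. Cauchy--Schwarz combined with $P^{\ast}_{n}(e_{0};q;x)=1$ gives $P^{\ast}_{n}(|t-x|;q;x)\leq \sqrt{P^{\ast}_{n}((t-x)^{2};q;x)}$, so the task reduces to controlling $P^{\ast}_{n}((t-x)^{2};q;x)$. Expanding the square and applying Lemma \ref{lemma3.1} linearly, the $x^{2}$ contributions cancel identically and only the ``error parts'' of (\ref{4}) and (\ref{5}) survive, leaving an expression whose only non-cancellation happens in the coefficient of $xb_{n}/[n]_{q}$.

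The main technical point — and the step where I expect the bulk of the care to go — is dominating this surviving second moment by the constant $\delta_{n}$ defined in (\ref{ali}). For that I would (i) use the elementary estimate $E_{q}^{-[n]_{q}x/b_{n}}e_{q}^{q[n]_{q}x/b_{n}}\leq 1$, (ii) replace the difference $qD_{q}(A(q))-D_{q}(A(1))$ arising after cancellation by its triangle-inequality bound $q_{n}D_{q}(A(q))+D_{q}(A(1))$, and (iii) absorb the leftover factor $x$ in front of the leading term via $x\leq (1+x^{2})\|e_{1}\|_{\rho_{0}}$ and the constant term via $1\leq (1+x^{2})\|e_{0}\|_{\rho_{0}}$, consistent with the weighted norm appearing in the definition of $\omega(\cdot,\cdot)_{\rho_{0}}$. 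These three moves together yield $P^{\ast}_{n}((t-x)^{2};q;x)\leq \delta_{n}$ in the relevant weighted sense.

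With this bound in hand, the choice $\delta=\sqrt{\delta_{n}}$ reduces the parenthesis in the opening display to $1+\sqrt{\delta_{n}}/\sqrt{\delta_{n}}=2$, and the asserted estimate
$$|P^{\ast}_{n}(f;q;x)-f(x)|\leq 2\omega(f,\sqrt{\delta_{n}})_{\rho_{0}}$$
follows immediately. So the plan is essentially three bookkeeping moves — positivity plus (\ref{r1}), Cauchy--Schwarz, and the moment computation via Lemma \ref{lemma3.1} — with the only delicate point being the algebraic simplification of the central moment and its absorption into $\delta_{n}$; everything else is a routine modulus-of-continuity argument.
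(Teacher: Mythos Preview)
Your proposal is correct and follows essentially the same route as the paper's proof: positivity and linearity together with (\ref{r1}), then Cauchy--Schwarz to pass from $P^{\ast}_{n}(|t-x|;q;x)$ to the square root of the second central moment, then Lemma~\ref{lemma3.1} combined with $E_{q}^{-[n]_{q}x/b_{n}}e_{q}^{q[n]_{q}x/b_{n}}\leq 1$ to bound that moment by $\delta_{n}$, and finally the choice $\delta=\sqrt{\delta_{n}}$. Your steps (i)--(iii) simply make explicit the algebra the paper compresses into the single line~(\ref{a33}).
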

\begin{proof}
By the linearity and positivity of the operators $P^{\ast}_{n}\left(
f;q;x\right)$, one can write
\begin{eqnarray*}
|P^{\ast}_{n}\left( f;q;x\right)-f(x)| &\leq&\frac{E_{q}^{-\frac{[n]_{q}}{b_{n}}x}}{A\left( 1\right) }
\sum_{k=0}^{\infty }\frac{p_{k}\left( q;\frac{[n]_{q}}{b_{n}}x\right) }{\left[ k\right] _{q}!}\left |f\left(\frac{\left[ k\right] _{q}}
{\left[ n\right] _{q} }b_{n}\right)-f(x)\right | \\
&\leq& \frac{E_{q}^{-\frac{[n]_{q}}{b_{n}}x}}{A\left( 1\right)
}\sum_{k=0}^{\infty }\frac{p_{k}\left( q;\frac{[n]_{q}}{b_{n}}x\right) }{\left[ k\right] _{q}!}\omega(f,\delta)\left\{\frac{1}{\delta}\left | \frac{\left[ k%
\right] _{q} }{\left[ n\right] _{q}}b_{n}-x\right |+1\right\}\\
&=&\left\{\frac{1}{\delta}\frac{E_{q}^{-\frac{[n]_{q}}{b_{n}}x}}{A\left(
1\right) }\sum_{k=0}^{\infty }\frac{p_{k}\left(
q;\frac{[n]_{q}}{b_{n}}x\right) }{\left[ k\right]_{q}!}\left |
\frac{\left[ k\right] _{q}}{\left[ n\right] _{q} }b_{n}-x\right
|+1\right\}\omega(f,\delta) .
\end{eqnarray*}
If we apply Cauchy-Schwarz inequality for sums, we obtain
\begin{eqnarray*}
\sum_{k=0}^{\infty }\frac{p_{k}\left(
q;\frac{[n]_{q}}{b_{n}}x\right) }{\left[ k\right] _{q}!}\left |
\frac{\left[ k\right] _{q} }{\left[ n\right] _{q}}b_{n}-x\right|^{2}\leq\left(\sum_{k=0}^{\infty }\frac{p_{k}\left( q;%
\left[ n\right] _{q}t\right) }{\left[ k\right] _{q}!} \right)^{1/2}
\left(\sum_{k=0}^{\infty }\frac{p_{k}\left(
q;\frac{[n]_{q}}{b_{n}}x\right) }{\left[ k\right] _{q}!}\left (
\frac{\left[ k \right] _{q} }{\left[ n\right] _{q}
}b_{n}-x\right)^{2}\right)^{1/2}.
\end{eqnarray*}
Using above inequality and by (\ref{3}), we have
\begin{eqnarray*}
|P^{\ast}_{n}\left( f;q;x\right)-f(x)|
&\leq&\omega(f,\delta)\left\{1+\frac{1}{\delta}\left[\frac{E_{q}^{-\frac{[n]_{q}}{b_{n}}x}}{A\left(
1\right) } \sum_{k=0}^{\infty }\frac{p_{k}\left(
q;\frac{[n]_{q}}{b_{n}}x\right) }{\left[ k\right] _{q}!}\left (
\frac{\left[ k\right] _{q}}{\left[ n\right] _{q}}b_{n}-x\right)^{2}\right]^{1/2}\right\}\\
&=&\omega(f,\delta)\left\{1+\frac{1}{\delta}
\left[\frac{E_{q}^{-\frac{[n]_{q}}{b_{n}}x}}{A\left( 1\right)
}\sum_{k=0}^{\infty }\frac{p_{k}\left(
q;\frac{[n]_{q}}{b_{n}}x\right) }{\left[ k\right] _{q}!}\left (
\frac{\left[ k\right] _{q} }{\left[ n\right] _{q}}b_{n}-x\right)^{2}\right]^{1/2}\right\}\\
 &=&\omega(f,\delta)\left\{1+\frac{1}{\delta}\left[P^{\ast}_{n}\left(
 s-e_{1})^{2};q;x\right)\right]^{1/2}\right\}.
\end{eqnarray*}
From Lemma \ref{lemma3.1} and the fact
$E_{q}^{-\frac{[n]_{q_{n}}}{b_{n}}}e_{q}^{\frac{q_{n}[n]_{q_{n}}}{b_{n}}}<1$,
we have
\begin{eqnarray}\label{a33}
P^{\ast}_{n}\left( (s-e_{1})^{2};q_{n};x\right)\leq\frac{
q_{n}D_{q}(A(q))+D_{q}(A(1))}{A(1)}\frac{b_{n}}{[n]_{q_{n}}}\parallel
e_{1}\parallel_{\rho_{0}}+\frac{D^{2}_{q}(A(1))}{A(1)}
\frac{b^{2}_{n}}{[n]^{2}_{q_{n}}}\parallel
e_{0}\parallel_{\rho_{0}}.
\end{eqnarray}
By \eqref{a33} and we choose $\delta=\sqrt{\delta_{n}}$, we get
\begin{equation*}\label{rr1}
|P^{\ast}_{n}\left( f;q;x\right)-f(x)|\leq
2\omega(f,\sqrt{\delta_{n}})_{\rho_{0}}.
\end{equation*}
 This step concludes the proof.
\end{proof}

 We know that a function $f\in C[0,\infty)$  is in
 $Lip_{M}(\alpha)$ on $F$, $0<\alpha\leq 1$, $F$ is a any bounded subset of the interval $[0,\infty)$ if it is satisfies the conditions

\begin{equation}\label{a3}
\left|f(y)-f(x) \right|\leq
M|x-y|^{\alpha},\,\,y\in[0,\infty)\,\,\textrm{and}\,\,x\in F,
\end{equation}
where $M$ is a constant depending only $\alpha$ and $f$.

\begin{thm}
Let $f \in C[0,\infty)\cap Lip_{M}(\alpha)$, $0<\alpha\leq 1$. Then,
we get
\begin{equation*}
|P^{\ast}_{n}(f;q_n;x)-f(x)|\leq
M\left(\delta^{\alpha/2}_{n}+d^{\alpha}(x,F)\right),\,\,x\in[0,\infty),
\end{equation*}
where $M$ is a constant depending only $\alpha$ and $f$ and $d(x,F)$
is the distance between $x$ and $F$ defined by $d(x,F)=\inf\{|x-t|:
t\in F\}$.
\end{thm}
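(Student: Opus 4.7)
The plan is the standard Lipschitz-on-a-subset argument adapted to positive linear operators. Fix $x \in [0,\infty)$ and choose a point $x_0 \in F$ realizing (or approximating to within $\varepsilon$) the distance $d(x,F) = \inf\{|x-t|:t\in F\}$; this pivot is needed because the Lipschitz bound \eqref{a3} only applies when one of the two arguments sits in $F$. Inserting $\pm f(x_0)$ and applying \eqref{a3} twice (each time with $x_0\in F$) gives
\[
|f(t)-f(x)| \leq |f(t)-f(x_0)|+|f(x_0)-f(x)| \leq M|t-x_0|^{\alpha} + M\, d^{\alpha}(x,F).
\]
Subadditivity of $s\mapsto s^{\alpha}$ on $[0,\infty)$ for $0<\alpha\leq 1$ then yields $|t-x_0|^{\alpha}\leq |t-x|^{\alpha}+d^{\alpha}(x,F)$.

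Next, I would apply $P^{\ast}_n(\cdot;q_n;x)$, using its positivity and linearity together with $P^{\ast}_n(e_0;q_n;x)=1$ from \eqref{3}, to obtain
\[
|P^{\ast}_n(f;q_n;x)-f(x)| \leq M\, P^{\ast}_n(|\cdot-x|^{\alpha};q_n;x) + 2M\, d^{\alpha}(x,F).
\]
To handle the remaining fractional moment, the key tool is Hölder's inequality with conjugate exponents $p=2/\alpha$ and $q=2/(2-\alpha)$, which for any positive linear operator gives
\[
P^{\ast}_n(|\cdot-x|^{\alpha};q_n;x) \leq \bigl[P^{\ast}_n((\cdot-x)^{2};q_n;x)\bigr]^{\alpha/2}\bigl[P^{\ast}_n(e_0;q_n;x)\bigr]^{(2-\alpha)/2}.
\]
The second factor equals $1$, and the first is controlled by the second-moment bound \eqref{a33} already established inside the proof of Theorem \ref{thm4.1}, so $P^{\ast}_n(|\cdot-x|^{\alpha};q_n;x)\leq \delta_n^{\alpha/2}$. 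Combining and absorbing the numerical factor $2$ into the constant (permitted because $M$ is allowed to depend on $\alpha$ and $f$) produces the claimed inequality.

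I do not expect any serious obstacle here; the only delicate point is the case in which the infimum defining $d(x,F)$ is not attained in $F$. That is handled by taking a minimizing sequence $(x_0^{(m)})\subset F$ with $|x-x_0^{(m)}|\to d(x,F)$, running the argument above for each $m$, and passing to the limit using continuity of $f$ and of $s\mapsto s^{\alpha}$. No $q$-analogue complication arises beyond this point, since once Hölder reduces the problem to the second moment $P^{\ast}_n((\cdot-x)^{2};q_n;x)$, the earlier estimate \eqref{a33} does all the remaining work.
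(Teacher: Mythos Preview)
Your proposal is correct and follows essentially the same route as the paper: pivot at a point $x_0$ realizing $d(x,F)$, apply the Lipschitz condition \eqref{a3} together with subadditivity of $s\mapsto s^{\alpha}$, reduce the fractional moment to the second central moment via H\"older, and invoke the bound \eqref{a33}. The paper picks $x_0\in\overline{F}$ rather than running your limiting argument, and is less explicit about the subadditivity step and the absorbed factor~$2$, but the substance is identical (and your H\"older exponents $p=2/\alpha$, $q=2/(2-\alpha)$ are the correct ones).
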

\begin{proof}
Let $\overline{F}$ be the closure of $F$ in $[0,\infty)$. The there
exists al leat point $x_{0}\in \overline{F}$ such that
$d(x,F)=|x-x_{0}|$. By our assumption and monotonicity of
$P^{\ast}_{n}\left( f;q_{n};x\right)$, one can write the following

\begin{eqnarray*}
|P^{\ast}_{n}(f;q_n;x)-f(x)|&\leq &
P^{\ast}_{n}\left(|f(t)-f(x)|;q_n;x\right)+P^{\ast}_{n}\left(|f(x)-f(x_{0})|;q_n;x\right)\\
&\leq&
M\{P^{\ast}_{n}\left(|t-x|^{\alpha};q_n;x\right)+|x-x_{0}|^{\alpha}\}.
\end{eqnarray*}
Next, we applying the H\"{o}lder inequality with
$p=\frac{1}{\alpha}, q=\frac{\alpha}{2-\alpha}$ and we get

\begin{eqnarray*}
|P^{\ast}_{n}(f;q_n;x)-f(x)|&\leq &
M\{P^{\ast}_{n}\left[|t-x|^{2};q_n;x\right]^{\alpha/2}+|x-x_{0}|^{\alpha}\}.
\end{eqnarray*}
This step conclude the proof.
\end{proof}
Now, we obtain local direct estimate of the operators
$P^{\ast}_{n}\left( f;q;x\right)$ using the Lipschitz-type maximal
function of order $\alpha$ introduce Lenz \cite{lenz} as
\begin{eqnarray}\label{a4}
\omega_{\alpha}(f,x)=\sup_{t\neq x,\,\,t\in
[0,\infty)}\frac{|f(t)-f(x)|}{|t-x|^{\alpha}},\,\,x\in[0,\infty)\,\,\textrm{and}\,\,\alpha\in(0,1].
\end{eqnarray}

\begin{thm}
Let $f \in Lip_{M}(\alpha)$, $0<\alpha\leq 1$, then we have
\begin{equation*}
|P^{\ast}_{n}(f;q_n;.)-f(.)|\leq
\omega_{\alpha}(f,x)\delta^{\alpha/2}_{n},
\end{equation*}
where  $\delta_{n}$ defined in \eqref{ali}.
\end{thm}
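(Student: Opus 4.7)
The plan is to reduce the estimate to bounding a second moment of the operator, exactly as was done in Theorem~\ref{thm4.1}. Starting from the definition \eqref{a4} of the Lipschitz-type maximal function, I have the pointwise bound
\begin{equation*}
|f(t)-f(x)| \leq \omega_{\alpha}(f,x)\,|t-x|^{\alpha}
\end{equation*}
for every $t\in[0,\infty)$. Since $P^{\ast}_{n}(\cdot;q_n;x)$ is a positive linear operator and $P^{\ast}_{n}(e_0;q_n;x)=1$ by \eqref{3}, I can write
\begin{equation*}
|P^{\ast}_{n}(f;q_n;x)-f(x)| \leq P^{\ast}_{n}(|f(t)-f(x)|;q_n;x) \leq \omega_{\alpha}(f,x)\, P^{\ast}_{n}(|t-x|^{\alpha};q_n;x).
\end{equation*}

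Next, I would apply H\"older's inequality with conjugate exponents $p=2/\alpha$ and $q=2/(2-\alpha)$ to the positive linear functional $P^{\ast}_{n}(\cdot;q_n;x)$, which yields
\begin{equation*}
P^{\ast}_{n}(|t-x|^{\alpha};q_n;x) \leq \bigl[P^{\ast}_{n}((t-x)^{2};q_n;x)\bigr]^{\alpha/2} \bigl[P^{\ast}_{n}(e_0;q_n;x)\bigr]^{(2-\alpha)/2}.
\end{equation*}
Using $P^{\ast}_{n}(e_0;q_n;x)=1$ again, the second factor is $1$, so the estimate collapses to the $\alpha/2$-th power of the second central moment.

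Finally, I would invoke the bound \eqref{a33} established in the proof of Theorem~\ref{thm4.1}, namely $P^{\ast}_{n}((s-e_{1})^{2};q_n;x) \leq \delta_{n}$ with $\delta_n$ defined in \eqref{ali}. Substituting this into the preceding chain gives
\begin{equation*}
|P^{\ast}_{n}(f;q_n;x)-f(x)| \leq \omega_{\alpha}(f,x)\,\delta_{n}^{\alpha/2},
\end{equation*}
which is exactly the desired statement. The main (and really only) obstacle is invoking H\"older's inequality correctly for a positive linear operator when $\alpha<1$, but this is standard and the already-computed second-moment bound from Theorem~\ref{thm4.1} makes the rest routine.
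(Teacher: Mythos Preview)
Your proof is correct and follows essentially the same route as the paper: bound by the Lipschitz-type maximal function, apply H\"older's inequality to reduce to the second central moment, and then invoke the estimate \eqref{a33} from Theorem~\ref{thm4.1}. Your H\"older exponents $p=2/\alpha$, $q=2/(2-\alpha)$ are in fact the correct conjugate pair (the paper's stated exponents contain a typo), but otherwise the arguments are identical.
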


\begin{proof}
From (\ref{a4}), we obtain
\begin{eqnarray*}
|P^{\ast}_{n}(f;q_n;x)-f(x)|&\leq&
P^{\ast}_{n}(|f(t)-f(x)|,q;x)\\
&\leq&\omega_{\alpha}(f,x) P^{\ast}_{n}(|t-x|^{\alpha},q;x).
\end{eqnarray*}
Again for $p=\frac{1}{\alpha}, q=\frac{\alpha}{2-\alpha}$, applying
the H\"{o}lder  inequality, we get
\begin{eqnarray*}
|P^{\ast}_{n}(f;q_n;x)-f(x)|&\leq&
P^{\ast}_{n}(|f(t)-f(x)|,q;x)\\
&\leq& \omega_{\alpha}(f,x)
\{P^{\ast}_{n}(e_{1}-x)^{2},q;x)\}^{\alpha/2}=\omega_{\alpha}(f,x)\delta^{\alpha/2}_{n}.
\end{eqnarray*}
Hence, we get the desired result.
\end{proof}

\section{Local Approximation}
In this section, we state the local approximation theorem of the
operators $P^{\ast}_{n}\left(f;q;x \right)$. Let $C_B\left[0,\infty
\right)$ be the space of all real valued continuous bounded
functions $f$ on $\left[0,\infty \right)$ with the norm $\parallel f
\parallel=\sup\left\lbrace \vert f(x)\vert :
x\in[0,\infty)\right\rbrace $. The K-functional of $f$ is defined by
\begin{eqnarray*}
K_2(f;\delta)=\inf_{g\in W^2}\left\lbrace \Vert f-g\Vert +
\delta\Vert g'' \Vert\right\rbrace,
\end{eqnarray*}
where $\delta>0$ and $W^2=\left\lbrace g\in C_B\left[0,\infty \right): g', g''\in C_B[0,\infty) \right\rbrace $. By
Devore-Lorentz \cite[p. 177]{lorenz}, there exists an absolute
constant $C>0$ such that
\begin{eqnarray} \label{5.1}
K_2\left(f,\delta\right)\leqslant C \omega_2\left(f,\sqrt{\delta}\right)
\end{eqnarray}
where
\begin{eqnarray*}
\omega_2\left(f,\sqrt{\delta}\right)=\sup_{0<h\leq 0} \sup_{x\in
[0,\infty)}\left\vert f(x+2h)-2f(x+h)+f(x)\right\vert
\end{eqnarray*}
is the second order modulus of smoothness of $f$. Moreover,
\begin{eqnarray*}
\omega(f,\delta)=\sup_{0<h\leq 0} \sup_{x\in [0,\infty)}\vert
f(x+h)-f(x) \vert
\end{eqnarray*}
denotes the modulus of continuity of $f$.

Now, we give the direct local approximation theorem for the
operators $P^{\ast}_{n}\left(f;q;x \right)$.

\begin{thm}
Let $q \in (0,1)$. We have
\begin{eqnarray*}
\vert P^{\ast}_{n}\left(f;q;x \right)-f(x)\vert \leq K
\omega_2\left(f,\varphi_{n} \right)+
\omega\left(f,\frac{D_{q}(A(1))}{A(1)}\frac{b_{n}}{[n]_{q}}\right)
\end{eqnarray*}
$\forall x\in $ $[0,\infty)$, $f\in C_B\left[0,\infty \right)$,
where $K$ is a positive constant and
\begin{equation*}\label{ali}
\varphi_{n}=\frac{
q_{n}D_{q}(A(q))+D_{q}(A(1))}{A(1)}\frac{b_{n}}{[n]_{q_{n}}}+\frac{A(1)D^{2}_{q}(A(1))+[D_{q}(A(1))]^{2}}{A^{2}(1)}
\frac{b^{2}_{n}}{[n]^{2}_{q_{n}}}
\end{equation*}

\end{thm}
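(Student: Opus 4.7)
The plan is to follow the classical Peetre K-functional scheme for local approximation, adapted to the present $q$-Chlodowsky setting. First I would introduce the auxiliary operator
\begin{equation*}
\widetilde{P}^{\ast}_{n}(f;q;x) = P^{\ast}_{n}(f;q;x) - f\bigl(P^{\ast}_{n}(e_{1};q;x)\bigr) + f(x),
\end{equation*}
which by Lemma~\ref{lemma3.1} satisfies $\widetilde{P}^{\ast}_{n}(e_{0};q;x) = 1$ and $\widetilde{P}^{\ast}_{n}(e_{1}-x;q;x) = 0$, i.e.\ it reproduces linear functions. In particular $\|\widetilde{P}^{\ast}_{n}(f;q;\cdot)\| \leq 3\|f\|$ for every $f \in C_{B}[0,\infty)$.

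Next, for any $g \in W^{2}$, I would apply the Taylor expansion $g(t) = g(x) + g'(x)(t-x) + \int_{x}^{t}(t-u)g''(u)\,du$ and act with $\widetilde{P}^{\ast}_{n}(\cdot;q;x)$. Because the linear part is annihilated, this gives
\begin{equation*}
\bigl|\widetilde{P}^{\ast}_{n}(g;q;x) - g(x)\bigr| \leq \|g''\|\,\widetilde{P}^{\ast}_{n}\bigl((e_{1}-x)^{2};q;x\bigr).
\end{equation*}
The key computation is then expanding $\widetilde{P}^{\ast}_{n}((e_{1}-x)^{2};q;x) = P^{\ast}_{n}((e_{1}-x)^{2};q;x) + (P^{\ast}_{n}(e_{1};q;x)-x)^{2}$ via Lemma~\ref{lemma3.1}, using the squared first moment to produce the $[D_{q}(A(1))]^{2}/A^{2}(1)$ piece and the second central moment to produce the other pieces; together with the bound $E_{q}^{-[n]_{q}x/b_{n}} e_{q}^{q[n]_{q}x/b_{n}} < 1$, this yields $\widetilde{P}^{\ast}_{n}((e_{1}-x)^{2};q;x) \leq \varphi_{n}$.

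For general $f \in C_{B}[0,\infty)$, the standard decomposition
\begin{equation*}
\bigl|P^{\ast}_{n}(f;q;x) - f(x)\bigr| \leq \bigl|\widetilde{P}^{\ast}_{n}(f-g;q;x)\bigr| + |(f-g)(x)| + \bigl|\widetilde{P}^{\ast}_{n}(g;q;x) - g(x)\bigr| + \bigl|f(P^{\ast}_{n}(e_{1};q;x)) - f(x)\bigr|
\end{equation*}
bounds the first three terms by $4\|f-g\| + \varphi_{n}\|g''\|$. Taking the infimum over $g \in W^{2}$ produces $4K_{2}(f,\varphi_{n}/4)$, which by \eqref{5.1} is dominated by $K\,\omega_{2}(f,\sqrt{\varphi_{n}})$ (replacing $\varphi_{n}$ by $\sqrt{\varphi_{n}}$ in the statement's modulus). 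The last term is controlled by $\omega(f,|P^{\ast}_{n}(e_{1};q;x)-x|)$, and the explicit form of $P^{\ast}_{n}(e_{1};q;x)$ from Lemma~\ref{lemma3.1} combined with $E_{q}^{-[n]_{q}x/b_{n}} e_{q}^{q[n]_{q}x/b_{n}} < 1$ gives exactly $\omega\bigl(f,\frac{D_{q}(A(1))}{A(1)}\frac{b_{n}}{[n]_{q}}\bigr)$.

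The main obstacle, and the place where one must be careful, is the bookkeeping in step three: assembling the terms in $P^{\ast}_{n}((e_{1}-x)^{2};q;x) + (P^{\ast}_{n}(e_{1};q;x)-x)^{2}$ from the three formulas in Lemma~\ref{lemma3.1}, checking cancellations, and using the bound on $E_{q}^{-\alpha}e_{q}^{q\alpha}$ to strip off the exponential factors so that only the coefficients involving $D_{q}(A(q))$, $D_{q}(A(1))$, $[D_{q}(A(1))]^{2}$ and $D_{q}^{2}(A(1))$ survive, recovering precisely the expression for $\varphi_{n}$ stated in the theorem.
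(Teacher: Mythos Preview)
Your approach is essentially identical to the paper's: the same auxiliary operator $\widetilde{P}^{\ast}_{n}$, the same Taylor-expansion argument for $g\in W^{2}$, the same decomposition for general $f$, and the same use of the K-functional/$\omega_{2}$ relation.

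One technical slip to fix: the inequality $\bigl|\widetilde{P}^{\ast}_{n}(g;q;x)-g(x)\bigr|\leq \|g''\|\,\widetilde{P}^{\ast}_{n}\bigl((e_{1}-x)^{2};q;x\bigr)$ does not follow as stated, because $\widetilde{P}^{\ast}_{n}$ is \emph{not} a positive operator (you subtract a point value), and in fact $\widetilde{P}^{\ast}_{n}\bigl((e_{1}-x)^{2};q;x\bigr)=P^{\ast}_{n}\bigl((e_{1}-x)^{2};q;x\bigr)-\bigl(P^{\ast}_{n}(e_{1};q;x)-x\bigr)^{2}$, with a minus sign rather than the plus you wrote. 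The paper handles this exactly as you should: split $\widetilde{P}^{\ast}_{n}$ back into $P^{\ast}_{n}$ and the point evaluation, bound each integral remainder separately via positivity of $P^{\ast}_{n}$, and obtain $\bigl[P^{\ast}_{n}\bigl((e_{1}-x)^{2};q;x\bigr)+\bigl(P^{\ast}_{n}(e_{1};q;x)-x\bigr)^{2}\bigr]\|g''\|$. That sum is the quantity you then correctly identify as bounded by $\varphi_{n}$, and from there your proof and the paper's coincide.
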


\begin{proof}
Let us define the following operators
\begin{eqnarray}\label{5.2}
\widetilde{P}^{\ast}_{n}\left(f;q;x \right)= P^{\ast}_{n}\left(f;q;x
\right)-f\left(x+\frac{D_{q}(A(1))E_{q}^{-\frac{[n]_{q}}{b_{n}}x}e_{q}^{\frac{q[n]_{q}}{b_{n}}x}}{A(1)}\frac{b_{n}}{[n]_{q}}
\right)+f(x),
\end{eqnarray}
 $x\in [0,\infty)$. The operators $\widetilde{P}^{\ast}_{n}\left(f;q;x \right)$ are linear. Thus, we have the following:
 \begin{eqnarray}\label{5.3}
\widetilde{P}^{\ast}_{n}\left(s-x;q;x \right)=0,
 \end{eqnarray}
 (see Lemma  \ref{lemma3.1}).
Let $g\in W^2$, from Taylor's expansion
\begin{eqnarray*}
g(s)=g(x)+g'(x)(s-x)+ \int_{x}^s (s-u)g''(x)du,
\end{eqnarray*}
$s\in[0,\infty)$ and (\ref{5.3}) we obtain
\begin{eqnarray*}
\widetilde{P}^{\ast}_{n}\left(g;q;x \right)=g(x)+
\widetilde{P}^{\ast}_{n}\left(\int_{x}^s (s-u)g''(x)du \right).
\end{eqnarray*}

By (\ref{5.2}), we have the following
\begin{eqnarray}\label{5.4}
\vert \widetilde{P}^{\ast}_{n}\left(g;q;x\right)-g(x) \vert
&\leqslant & \left\vert P^{\ast}_{n}\left(\int_{x}^s (s-u)g''(u)du
\right)\right\vert\nonumber\\ &&+ \left\vert
\int_{x}^{x+\frac{D_{q}(A(1))E_{q}^{-\frac{[n]_{q}}{b_{n}}x}e_{q}^{\frac{q[n]_{q}}{b_{n}}x}}{A(1)}\frac{b_{n}}{[n]_{q}}}
\left(x+\frac{D_{q}(A(1))E_{q}^{-\frac{[n]_{q}}{b_{n}}x}e_{q}^{\frac{q[n]_{q}}{b_{n}}x}}{A(1)}\frac{b_{n}}{[n]_{q}}-u\right)g''(u)du \right\vert \nonumber \\
&\leqslant & P^{\ast}_{n} \left(\left\vert \int_{x}^s
(s-u)g''(u)du\right\vert ,x \right)\nonumber \\
&&+\int_{x}^{x+\frac{D_{q}(A(1))E_{q}^{-\frac{[n]_{q}}{b_{n}}x}e_{q}^{\frac{q[n]_{q}}{b_{n}}x}}{A(1)}\frac{b_{n}}{[n]_{q}}}
\left\vert
x+\frac{D_{q}(A(1))E_{q}^{-\frac{[n]_{q}}{b_{n}}x}e_{q}^{\frac{q[n]_{q}}{b_{n}}x}}{A(1)}\frac{b_{n}}{[n]_{q}}-u
\right\vert \left\vert g''(u)\right\vert du \nonumber \\
&\leq & \left[ P^{\ast}_{n}\left(s-x\right)^2+
\left(\frac{D_{q}(A(1))E_{q}^{-\frac{[n]_{q}}{b_{n}}x}e_{q}^{\frac{q[n]_{q}}{b_{n}}x}}{A(1)}\frac{b_{n}}{[n]_{q}}
\right)^2 \right]\Vert g'' \Vert
\end{eqnarray}

Therefore, from (\ref{5.4}), we obtain
\begin{eqnarray}\label{5.5}
\vert \widetilde{P}^{\ast}_{n}\left(g;q;x\right)-g(x) \vert \leq
\varphi_{n}.
\end{eqnarray}
By (\ref{A}), (\ref{lemma3.1}) and (\ref{5.2}), we get
\begin{eqnarray}\label{5.6}
\vert \widetilde{P}^{\ast}_{n}\left(f;q;x\right) \vert &\leqslant & P^{\ast}_{n}\left(f;q;x\right)+2\Vert f \Vert \nonumber \\
&\leqslant & \Vert f \Vert P^{\ast}_{n}\left(1;q;x\right)+2\Vert f \Vert \nonumber \\
&\leqslant & 3\Vert f \Vert
\end{eqnarray}

and by (\ref{5.2}), (\ref{5.5}) and (\ref{5.6})
\begin{eqnarray*}
\vert P^{\ast}_{n}\left(f;q;x\right)-f(x) \vert &\leqslant & \vert
\widetilde{P}^{\ast}_{n}\left(f-g;q;x\right)-(f-g)(x) \vert
+\vert\widetilde{P}^{\ast}_{n}\left(g;q;x\right)-g(x) \vert \\
&&+ \left\vert f\left(x+\frac{D_{q}(A(1))E_{q}^{-\frac{[n]_{q}}{b_{n}}x}e_{q}^{\frac{q[n]_{q}}{b_{n}}x}}{A(1)}\frac{b_{n}}{[n]_{q}} \right)-f(x) \right\vert \\
&\leqslant & 4\Vert f-g \Vert + \varphi_{n}\Vert g'' \Vert.
\end{eqnarray*}
From (\ref{5.1}), one can see that
\begin{eqnarray*}
\vert P^{\ast}_{n}\left(f;q;x\right)-f(x) \vert\leq K
\omega_2\left(f,\varphi_{n} \right)+
\omega\left(f,\frac{D_{q}(A(1))}{A(1)}\frac{b_{n}}{[n]_{q}}\right).
\end{eqnarray*}
and this concludes the proof.
\end{proof}

\end{document}